\numberwithin{equation}{section}
\newtheorem{Theorem}{Theorem}[section]	
\newtheorem*{Theorem*}{Theorem}
\newtheorem{proposition}[Theorem]{Proposition} 
\newtheorem{lemma}[Theorem]{Lemma}
\newtheorem*{defn*}{Definition}
\newtheorem{Cor}[Theorem]{Corollary}
\newtheorem*{remark}{Remark}
\newtheorem{example}{Example}[section]
\tikzset{
	schraffiert/.style={pattern=horizontal lines,pattern color=#1},
	schraffiert/.default=black
}
\tikzset{
	ultra thin/.style= {line width=0.1pt},
	very thin/.style=  {line width=0.2pt},
	thin/.style=       {line width=0.4pt},
	semithick/.style=  {line width=0.6pt},
	thick/.style=      {line width=0.8pt},
	very thick/.style= {line width=1.2pt},
	ultra thick/.style={line width=2.4pt}
}
\title{On measure estimates arising from Hausdorff distance convergence}
\author{Lior Tenenbaum}
\date{}
\address{Department of Mathematics\\
	Bar-Ilan university\\
	Ramat Gan, Israel}
\email{lior.tenenbaum@biu.ac.il}
\begin{document}
	\begin{abstract}
		We discuss a method to estimate the measure of a compact set which is approximated using the Hausdorff distance by a sequence of compact sets. We do this by considering corresponding fattenings of the sequence of compact sets and showing their measures converge. We further review applications of this result to study the measure of a spectrum of an operator which has a sequence of periodic approximations. 
	\end{abstract}
	
	\maketitle
\section{Introduction} \label{Sect:Intro}

Given a metric space $(X,d)$, a natural way to quantify the convergence of subsets in $X$ is via the Hausdorff distance,  see \cite[Chapter 7]{BBI01}. 
The \emph{Hausdorff distance} between two nonempty subsets $A,B\subseteq X$ is given by
\begin{equation} \label{Eq:HausDistDefn}
	d_H(A,B):= \max \big\{ \sup_{a\in A} d(a,B),   \sup_{b\in B} d(b,A) \big\},
\end{equation}
where $d(a,B)= \inf_{b\in B} d(a,b)$. 
The restriction of $d_H$ to $\mathcal{K}(X)$, the collection of nonempty compact subsets of $X$, is a metric.
Given a Borel measure $\mu$ on $X$, a natural question is when does  convergence with respect to the Hausdorff distance implies convergence of the associated measures. More precisely, given a Borel measure $\mu$ and a sequence of compact subsets $\{ A_n \}_{n=1}^\infty\subseteq \mathcal{K}(X)$ satisfying $\lim_{n\to \infty}d_H( A_n,A_\infty )=0$ for some $A_\infty\in \mathcal{K}(X)$, does it follow that $\lim_{n\to \infty} \mu(A_n)=\mu(A_\infty)$? \\
In general, this implication fails, as the following example shows.

\begin{example} \label{Examp:Simple}
	Consider the Lebesgue measure $\operatorname*{Leb}$ on $\mathbb{R}$ with the standard norm distance. Let $A_\infty:=[0,1]$ and 
	\[ A_n:=\Big\{ \frac{j}{n}: j\in \mathbb{Z}  \Big\} \cap [0,1]. \] 
	Then $\lim_{n\to \infty}d_H(A_n,A_\infty)=0$, while $\operatorname*{Leb}(A_\infty)=1$ and $\operatorname*{Leb}(A_n)=0$ for all $n\in \mathbb{N}$. Similarly, for each $0\leq\alpha<1$, define a sequence $A_n(\alpha):=A_n\cup[0,\alpha]$ for all $n\in \mathbb{N}$. Then $\lim_{n\to \infty} \operatorname*{Leb}\big( A_n(\alpha) \big)=\alpha\neq \operatorname*{Leb}(A_\infty)$, while $\lim_{n\to \infty} d_H\big( A_n(\alpha), A_\infty  \big)=0$.
\end{example}

	In fact, 
	for any compact metric space $(X,d)$ and any positive, non-atomic Borel measure $\mu$ on $X$,
	one can construct analogous counterexamples to those in Example \ref{Examp:Simple}. For the Lebesgue measure on  $\mathbb{R}^d$, a characterization of when Hausdorff convergence implies measure convergence was obtained in \cite{Beer74}. 
	In this note, we establish measure estimates for compact sets that arise as Hausdorff limits, even in cases where the corresponding measures fail to converge. 
	\\
	
	The Hausdorff distance can also be expressed in terms of \emph{fattenings} or \emph{thickenings}.
	Given a metric space $(X,d)$ and a nonempty subset $A\subseteq X$, define for each $\delta\geq 0$ the $\delta$-fattening of $A$ by
	\begin{equation} \label{Eq:SetFattDefn}
		A^{(\delta)}:= \bigcup_{a\in A} \overline{ B }_\delta(a),
	\end{equation}
	where $\overline{ B }_\delta(a):=\{ x\in X : d(x,a)\leq \delta  \}$. It is well-known that, for nonempty subsets $A,B\subseteq X$, 
	\begin{equation} \label{Eq:SetFattEquiv}
		d_H(A,B) = \inf\{ \eta>0 : B\subseteq A^{(\eta)} \; \text{and} \;  A\subseteq B^{(\eta)} \}.
	\end{equation}
	Moreover, if $\delta=d_H(A,B)$, then $A\subseteq B^{(\delta)}$ and $B\subseteq A^{(\delta)}$,  provided $\delta>0$ or both $A,B\subseteq X$ are closed. 
	Returning to Example \ref{Examp:Simple}, one computes explicitly that $\delta_n:=d_H(A_n,A_\infty)=\frac{1}{2n}$ and
	\[ A_\infty^{(\delta_n)}=\Big[ -\frac{1}{2n}, 1+\frac{1}{2n} \Big]=A_n^{(\delta_n)}.  \]
	Hence, 
	\begin{equation*}
		\operatorname*{Leb}( A_n^{(\delta_n)} )= 1+\frac{2}{2n} \overset{n\to \infty}{\longrightarrow} 1= \operatorname*{Leb}(A_\infty).
	\end{equation*}
	This observation extends to a much more general setting, as described in the following theorem. Before stating the theorem, we introduce additional needed notions. A metric space $(X,d)$ is called \emph{proper} if every closed and bounded subset of $X$ is also compact. A Borel measure $\mu$ on $X$ is said to be \emph{locally finite} if every point $x\in X$ has a neighborhood $U_x$ with $\mu(U_x)<\infty$. Since the discussed measures are locally finite measures and subadditive, every compact subset of $X$ has finite measure with respect to such $\mu$.
	\begin{Theorem} \label{Thm:Main1}
		Let $(X,d)$ be a proper metric space, and let $\mu$ be a locally finite Borel measure on $X$. Suppose that $\{A_n\}_{n=1}^\infty\subseteq \mathcal{K}(X)$ is a sequence of compact sets converging to $A_\infty \in \mathcal{K}(X)$ in the Hausdorff distance. Then one has
		\[  \lim_{n\to \infty} \mu\big( A_n^{(\delta_n)} \big) =\mu(A_\infty), \quad \text{where} \quad \delta_n:= d_H(A_n, A_\infty). \]
	\end{Theorem}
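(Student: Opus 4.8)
The plan is to trap the quantity $\mu\big(A_n^{(\delta_n)}\big)$ between $\mu(A_\infty)$ and $\mu\big(A_\infty^{(2\delta_n)}\big)$, and then let $n\to\infty$, since both bounds converge to $\mu(A_\infty)$. The lower bound will be immediate from the fattening characterization of $d_H$, while the upper bound reduces to a single continuity statement for the fattenings of the fixed compact set $A_\infty$.

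First I would establish the chain of inclusions
\[ A_\infty \subseteq A_n^{(\delta_n)} \subseteq A_\infty^{(2\delta_n)}. \]
The left inclusion is precisely the property recorded after \eqref{Eq:SetFattEquiv}: since $\delta_n=d_H(A_n,A_\infty)$ and both sets are compact, hence closed, one has $A_\infty\subseteq A_n^{(\delta_n)}$ even in the degenerate case $\delta_n=0$. For the right inclusion, the same property gives $A_n\subseteq A_\infty^{(\delta_n)}$, and fattening both sides by $\delta_n$ together with the triangle inequality yields $A_n^{(\delta_n)}\subseteq \big(A_\infty^{(\delta_n)}\big)^{(\delta_n)}\subseteq A_\infty^{(2\delta_n)}$. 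By monotonicity of $\mu$ this produces the sandwich
\[ \mu(A_\infty)\leq \mu\big(A_n^{(\delta_n)}\big)\leq \mu\big(A_\infty^{(2\delta_n)}\big). \]

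Next I would isolate the key continuity lemma: for any compact $K\subseteq X$, one has $\lim_{\varepsilon\to 0^+}\mu\big(K^{(\varepsilon)}\big)=\mu(K)$. Because $K$ is compact the infimum defining $d(x,K)$ is attained, so $K^{(\varepsilon)}=\{x\in X: d(x,K)\leq \varepsilon\}$; as $x\mapsto d(x,K)$ is $1$-Lipschitz, this sublevel set is closed, and it is bounded since $K$ is, hence compact by properness and therefore of finite measure. Moreover $\bigcap_{\varepsilon>0}K^{(\varepsilon)}=\{x:d(x,K)=0\}=K$, using that $K$ is closed. Taking any sequence $\varepsilon_k\downarrow 0$, the sets $K^{(\varepsilon_k)}$ decrease to $K$ with $\mu\big(K^{(\varepsilon_1)}\big)<\infty$, so continuity from above of $\mu$ gives $\mu\big(K^{(\varepsilon_k)}\big)\to\mu(K)$; monotonicity in $\varepsilon$ then upgrades this to the full one-sided limit.

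Applying the lemma to $K=A_\infty$ along $\varepsilon=2\delta_n\to 0$ forces $\mu\big(A_\infty^{(2\delta_n)}\big)\to\mu(A_\infty)$, and the squeeze theorem concludes the proof. The step to handle most carefully is the continuity lemma, whose proof genuinely rests on the properness of $X$: this is what guarantees that the fattenings $K^{(\varepsilon)}$ are compact and thus of finite measure, so that continuity from above of $\mu$ may be invoked. Without such a finiteness guarantee the decreasing-sequence argument can break down, and indeed this is the precise place where the locally finite and proper hypotheses enter essentially.
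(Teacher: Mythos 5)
Your proof is correct, and it takes a genuinely different route from the paper's. The paper argues via the dominated convergence theorem: it first proves pointwise convergence of indicator functions, $\mathsf{1}_{A_n^{(\delta_n)}}(x)\to \mathsf{1}_{A_\infty}(x)$ for every $x\in X$ (through the set-theoretic $\limsup$/$\liminf$ machinery of Lemma~\ref{Lem:SetFattIncl}), and then dominates the whole sequence by the single integrable function $\mathsf{1}_{A_\infty^{(2\hat{\delta})}}$ with $\hat{\delta}:=\max_{n}\delta_n$. You instead squeeze: the inclusions $A_\infty\subseteq A_n^{(\delta_n)}\subseteq \big(A_\infty^{(\delta_n)}\big)^{(\delta_n)}\subseteq A_\infty^{(2\delta_n)}$ (all available from \eqref{Eq:SetFattEquiv} and Lemma~\ref{Lem:SetFattObserv}~\eqref{Item:SetFatt1},\eqref{Item:SetFatt2}) give $\mu(A_\infty)\leq \mu\big(A_n^{(\delta_n)}\big)\leq \mu\big(A_\infty^{(2\delta_n)}\big)$, and the whole problem reduces to your continuity lemma $\mu\big(K^{(\varepsilon)}\big)\to\mu(K)$ as $\varepsilon\to 0^+$ for a fixed compact $K$, proved by continuity from above of $\mu$ along the decreasing compact sets $K^{(\varepsilon_k)}\downarrow K$. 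Both arguments invoke properness and local finiteness at exactly the same spot --- to ensure a fattening of $A_\infty$ is compact and hence of finite measure --- so they are of comparable depth; what yours buys is a more elementary and self-contained argument (monotonicity plus continuity from above, no DCT, no pointwise limits of indicators) together with an explicit quantitative sandwich $\mu(A_\infty)\leq \mu\big(A_n^{(\delta_n)}\big)\leq \mu\big(A_\infty^{(2\delta_n)}\big)$ valid for each fixed $n$; what the paper's route buys is reusable infrastructure, since the indicator-function lemma (in particular $\limsup_{n\to\infty}A_n\subseteq A_\infty$) also powers the semicontinuity Proposition in Section~\ref{Sect:ProofMain}. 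The only point to phrase carefully in your write-up is the degenerate case $\delta_n=0$, where the continuity lemma is not needed because $A_\infty^{(0)}=A_\infty$, so the squeeze holds trivially for those indices.
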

	Although the proof of this statement is elementary, it appears not to have been recorded previously, to the best of the author's knowledge. 
	
	Our interest in this result arises from the goal of studying the spectrum of a bounded operator through approximations measured in the Hausdorff distance. 
	Specifically, given a bounded Schr\"{o}dinger operator $H:\ell^2(\mathbb{Z}^d)\to \ell^2(\mathbb{Z}^d)$, we wish to approximate its spectrum $\Sigma_\infty:= \operatorname*{spec}(H)$ by a sequence of finite or periodic Schr\"{o}dinger operators $H_n:\ell^2( \mathbb{Z}^d ) \to \ell^2(\mathbb{Z}^d)$ satisfying
	\begin{equation} \label{Eq:BandRep}
		\lim_{n\to \infty} d_H( \Sigma_n,\Sigma_\infty )=0, \quad \text{where} \quad \Sigma_n:=\text{spec}(H_n).
	\end{equation}
	This approximation scheme appears, both implicitly and explicitly, in a variety of works on aperiodic Schrödinger operators, including the Almost-Mathieu operator \cite{AvSim83,AvMoSi91,Last93,Last94}, Sturmian Hamiltonians \cite{Casdagli1986,Sut87,BIST89,Ray95,Dam08,Dam13}, and various others.
	For these operators, Bloch--Floquet theory  \cite{SimonReedIV, Korot14, Kuc16, Fil21,LiuFil22} guarantees that such a sequence of periodic Schr\"{o}dinger operators $H_n$, referred in some places as having twisted boundary conditions, satisfy 
	\begin{equation*} \label{Eq:FloquetRep}
		\operatorname*{spec}(H_n)= \bigcup_{i=1}^{q_n} [a_i^{(n)},b_i^{(n)}] \quad \text{for each} \quad n\in \mathbb{N},
	\end{equation*}  
	for some $q_n\in \mathbb{N}$ and real numbers $\{ a^{(n)}_i \}_{i=1}^{q_n},\{ b_i^{(n)} \}_{i=1}^{q_n}\subseteq \mathbb{R}$. 
	Moreover, there exists a family of self-adjoint $q_n\times q_n$ matrices $\{H_n(\theta)\}_{\theta \in [0,1)^d}$, such that the interval $[a_i^{(n)},b_i^{(n)}]$ equals the range of the $i$-th eigenvalue of $H_n(\theta)$. Consult \cite[Section 2]{Fil21} and the references therein for more details. 
	\begin{Cor} \label{Cor:Last1}
		Let $H:\ell^2(\mathbb{Z}^d)\to \ell^2(\mathbb{Z}^d)$ be a bounded self-adjoint Schr\"{o}dinger operator, and let $\mu$ be a locally finite Borel measure on $\mathbb{R}$. Assume that there exists a sequence of periodic bounded self-adjoint  Schr\"{o}dinger operators $H_n:\ell^2(\mathbb{Z}^d)\to \ell^2(\mathbb{Z}^d)$ such that 
		\[ \delta_n:= d_H\big( \operatorname*{spec}(H_n) , \operatorname*{spec}(H) \big) \to 0 \quad \text{as} \; n\to \infty.\]
		Then
		\[  \mu \big( \operatorname*{spec}(H) \big)= \lim_{n\to \infty} \mu \Big(  \bigcup_{i=1}^{q_n}  \big[ a_i^{(n)}-\delta_n, b_i^{(n)}+\delta_n \big] \Big).  \]
	\end{Cor}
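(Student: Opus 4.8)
\section*{Proof proposal}

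The plan is to recognize this corollary as a direct specialization of Theorem \ref{Thm:Main1} to the setting $X = \mathbb{R}$ equipped with the standard metric, once the set appearing on the right-hand side is identified with the Hausdorff fattening $\Sigma_n^{(\delta_n)}$ of the approximating spectrum. Write $\Sigma_\infty := \operatorname{spec}(H)$ and $\Sigma_n := \operatorname{spec}(H_n)$.

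First I would verify that all hypotheses of Theorem \ref{Thm:Main1} are satisfied. The real line $(\mathbb{R}, |\cdot|)$ is proper by the Heine--Borel theorem, and $\mu$ is a locally finite Borel measure on $\mathbb{R}$ by assumption. Since $H$ and each $H_n$ are bounded self-adjoint operators, their spectra are nonempty, closed and bounded subsets of $\mathbb{R}$, hence compact; thus $\{\Sigma_n\}_{n=1}^\infty \subseteq \mathcal{K}(\mathbb{R})$ and $\Sigma_\infty \in \mathcal{K}(\mathbb{R})$. The standing assumption $\delta_n = d_H(\Sigma_n, \Sigma_\infty) \to 0$ is precisely the statement that $\Sigma_n \to \Sigma_\infty$ in the Hausdorff distance.

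The only substantive step is to compute the $\delta_n$-fattening of $\Sigma_n$ explicitly and match it with the set in the statement. In $\mathbb{R}$ the closed ball of radius $\delta_n$ about a point $x$ is the symmetric interval $\overline{B}_{\delta_n}(x) = [x - \delta_n, x + \delta_n]$, and the fattening operation distributes over unions directly from its definition \eqref{Eq:SetFattDefn}. Using the Bloch--Floquet decomposition $\Sigma_n = \bigcup_{i=1}^{q_n} [a_i^{(n)}, b_i^{(n)}]$ together with the elementary identity $[a,b]^{(\delta)} = [a-\delta, b+\delta]$ for a closed interval, I would conclude
\[ \Sigma_n^{(\delta_n)} = \bigcup_{i=1}^{q_n} [a_i^{(n)}, b_i^{(n)}]^{(\delta_n)} = \bigcup_{i=1}^{q_n} \big[ a_i^{(n)} - \delta_n, b_i^{(n)} + \delta_n \big]. \]

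With this identification in hand, Theorem \ref{Thm:Main1} applied to the sequence $A_n = \Sigma_n$ with limit $A_\infty = \Sigma_\infty$ yields $\lim_{n\to\infty} \mu(\Sigma_n^{(\delta_n)}) = \mu(\Sigma_\infty) = \mu(\operatorname{spec}(H))$, which is exactly the claimed equality. I do not anticipate any genuine obstacle here; the single point deserving care is the fattening computation, where one should note that the interval identity relies on the metric of $\mathbb{R}$ being the standard one (so that balls are symmetric intervals), and that the formula remains valid in the degenerate case $\delta_n = 0$, in which both sides collapse to $\Sigma_n$ itself since $\Sigma_n$ is closed.
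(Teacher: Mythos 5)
Your proposal is correct and follows essentially the same route as the paper: identify the right-hand side as the $\delta_n$-fattening of $\operatorname*{spec}(H_n)$ via the band structure $\operatorname*{spec}(H_n)=\bigcup_{i=1}^{q_n}[a_i^{(n)},b_i^{(n)}]$ from Bloch--Floquet theory, then apply Theorem \ref{Thm:Main1}. The paper compresses the fattening computation into its displayed identity \eqref{Eq:BandFatt}, while you additionally verify the hypotheses of the theorem (properness of $\mathbb{R}$, compactness of the spectra) explicitly, which the paper leaves implicit.
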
   
	This corollary provides a practical mean of estimating the measure of a spectrum that is approximated by periodic Schr\"{o}dinger operators. 
	Using terminology from scientific computing, the approximations in Corollary \ref{Cor:Last1} admit algorithms of $\Pi_1$ type for approximating the measure of the spectrum. See \cite{CEF24} for more details. This approach could lead to novel methods for determining spectral properties of higher dimensional discrete Schrödinger operators, see \cite[Open problem 30]{DaFi24-book_2}.
	We also note the extensive body of work deriving estimates on the measure, size, and location of periodic spectra, including \cite{Last93, GolKust19, Korot22, Fil25}.\\
	Further results concerning spectral approximations as in Corollary \ref{Cor:Last1}, including the proof of Corollary~\ref{Cor:Last1}, are presented in Section~\ref{Sect:Periodic}.
	The next section is devoted to the proof of Theorem~\ref{Thm:Main1}.
	
	\subsection*{Acknowledgments}
	The author gratefully acknowledges Ram Band, Philipp Bartmann, Siegfried Beckus, and Ron Rosenthal for many insightful discussions, and Philipp Bartmann, Jake Fillman, and Yannik Thomas for helpful comments and corrections. Special thanks are due to Siegfried Beckus for suggesting the spectral covers in Section~\ref{Sect:Periodic}, and to Jake Fillman for pointing out several related works connected to this note.\\
	The author also gratefully acknowledges the financial support of the Deutsche Forschungsgemeinschaft (DFG Grant No.~6789/1-1) and the Israel Science Foundation (ISF Grant No.~844/19).

\section{Proof of the main result} \label{Sect:ProofMain}
We build towards the proof of Theorem \ref{Thm:Main1}. For a space $X$ and a subset $A\subseteq X$, we denote the indicator function of $A$ by $\mathsf{1}_A:X\to \{0,1\}$. We use the fact that for any measure space $(X, \mathcal{B},\mu)$, we have $\mu(A)= \int_X \mathsf{1}_Ad\mu$ for every measurable set $A\in \mathcal{B}$. Using this characterization, it suffices to show
\begin{equation*}
	\lim_{n\to \infty} \int_{X} \mathsf{1}_{A_n^{(\delta_n)}}d\mu_n = \int_X \mathsf{1}_{A_\infty}d\mu 
\end{equation*}
for any sequence of compact sets $\{ A_n \}_{n=1}^\infty \subseteq \mathcal{K}(X)$ and $A_\infty\in \mathcal{K}(X)$ such that $\delta_n:=d_H( A_n, A_\infty )\to 0$ as $n\to \infty$. 
By the dominated convergence theorem (DCT), it is therefore enough to show that $\lim_{n\to \infty} \mathsf{1}_{A_n^{(\delta_n)}}(x)=\mathsf{1}_{A_\infty}(x)$ for all $x\in X$, and that there exists a dominating $\mu$-integrable function. The latter statement follows by taking $g(x)= \mathsf{1}_{A_\infty^{(2\hat{\delta})}}$ and $\hat{\delta}:= \max_{n\in \mathbb{N}}\delta_n$, in light of \eqref{Eq:SetFattEquiv} and the following observation.
\begin{lemma} \label{Lem:SetFattObserv}
	Let $(X,d)$ be a metric space and $A\subseteq X$ be a nonempty subset. 
	\begin{enumerate}[(a)]
		\item \label{Item:SetFatt1} For every $\delta,\eta \geq 0$, one has $(A^{(\delta)})^{(\eta)} \subseteq A^{(\delta+\eta)}$.
		\item \label{Item:SetFatt2} If $A\subseteq B$ and $\delta\geq 0$, then $A^{(\delta)}\subseteq B^{(\delta)}$.
		\item \label{Item:SetFatt3} Suppose in addition that $A$ is closed, and Let  $\{ A_n \}_{n=1}^\infty$ be a sequence of nonempty closed subsets of $X$ satisfying $ d_H(A_n,A)\to 0$ as $n\to \infty$. 
		Then, for every sequence $\{ \eta_n \}$ of nonnegative numbers with $\eta_n\to 0$ as $n\to \infty$, one has $\lim_{n\to \infty} d_H\big( A_n^{(\eta_n)},A \big)=0$.

	\end{enumerate}
\end{lemma}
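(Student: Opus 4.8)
The plan is to prove the three items in order, relying throughout on the elementary observation that a point $x$ lies in $A^{(\delta)}$ precisely when there is some $a\in A$ with $d(x,a)\le\delta$.

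For part (a), I would take $x\in (A^{(\delta)})^{(\eta)}$, pick $y\in A^{(\delta)}$ with $d(x,y)\le\eta$ and then $a\in A$ with $d(y,a)\le\delta$, and conclude from the triangle inequality that $d(x,a)\le d(x,y)+d(y,a)\le\eta+\delta$, so that $x\in A^{(\delta+\eta)}$. Part (b) is immediate from the union description in \eqref{Eq:SetFattDefn}: if $A\subseteq B$ then $\bigcup_{a\in A}\overline{B}_\delta(a)\subseteq\bigcup_{b\in B}\overline{B}_\delta(b)$, i.e.\ $A^{(\delta)}\subseteq B^{(\delta)}$. Neither step uses more than the triangle inequality, and I expect both to be one or two lines.

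The substantive step is part (c). Writing $\epsilon_n:=d_H(A_n,A)\to 0$, I would first record that, since $A_n$ and $A$ are closed, the containment form of the Hausdorff distance noted after \eqref{Eq:SetFattEquiv} yields $A_n\subseteq A^{(\epsilon_n)}$ and $A\subseteq A_n^{(\epsilon_n)}$. Combining these with parts (a) and (b) controls both fattening directions at once. On the one hand, applying (b) and then (a) gives $A_n^{(\eta_n)}\subseteq (A^{(\epsilon_n)})^{(\eta_n)}\subseteq A^{(\epsilon_n+\eta_n)}$. On the other hand, applying (b) to the inclusion $A_n\subseteq A_n^{(\eta_n)}$ (valid since $\eta_n\ge 0$) gives $A\subseteq A_n^{(\epsilon_n)}\subseteq (A_n^{(\eta_n)})^{(\epsilon_n)}\subseteq (A_n^{(\eta_n)})^{(\epsilon_n+\eta_n)}$, the last inclusion by monotonicity of the fattening in the radius. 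By the characterization \eqref{Eq:SetFattEquiv}, these two inclusions force $d_H(A_n^{(\eta_n)},A)\le\epsilon_n+\eta_n$, which tends to $0$ as $n\to\infty$, as required.

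I expect the only genuine care to be needed in part (c), specifically in keeping the two asymmetric halves of the Hausdorff distance straight: one must thicken $A_n$ on top of its $\epsilon_n$-approximation of $A$ (using (a) to absorb $\epsilon_n+\eta_n$ into a single fattening), whereas in the reverse direction one only needs that enlarging $A_n$ to $A_n^{(\eta_n)}$ can only decrease distances to it, so that $A$ remains within $\epsilon_n$. A secondary subtlety worth flagging is that $A^{(\delta)}$ is defined via closed balls, so it need not coincide with $\{x:d(x,A)\le\delta\}$ when the infimum defining $d(x,A)$ is not attained; this is precisely why the closedness hypothesis on $A$ and the $A_n$ is invoked, to guarantee the clean containments $A_n\subseteq A^{(\epsilon_n)}$ and $A\subseteq A_n^{(\epsilon_n)}$ rather than relying on a distance-function inequality.
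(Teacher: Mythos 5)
Your proposal is correct and follows essentially the same route as the paper: parts (a) and (b) from the triangle inequality and the definition, and for (c) the same two chains of inclusions via the closed-set containment form of $d_H$, yielding $d_H\big(A_n^{(\eta_n)},A\big)\le \epsilon_n+\eta_n$. If anything, your second chain $A\subseteq A_n^{(\epsilon_n)}\subseteq \big(A_n^{(\eta_n)}\big)^{(\epsilon_n)}\subseteq \big(A_n^{(\eta_n)}\big)^{(\epsilon_n+\eta_n)}$ is slightly more explicit than the paper's $A\subseteq A_n^{(\delta_n)}\subseteq A_n^{(\delta_n+\eta_n)}$, which leaves implicit the conversion into a fattening of $A_n^{(\eta_n)}$ before invoking \eqref{Eq:SetFattEquiv}.
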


\begin{proof} $\\$
 (a)
 This follows directly from the triangle inequality and \eqref{Eq:SetFattDefn}. \\
(b) This is immediate from the definition \eqref{Eq:SetFattDefn}.\\
(c)
Denote $\delta_n:=d_H(A_n,A)$ for all $n\in \mathbb{N}$, and let $\eta_n$ be a sequence of nonnegative real numbers such that $\eta_n\to 0$ as $n\to \infty$. Since $A_n$ and $A$ are closed,
 \eqref{Eq:SetFattEquiv} implies that $A_n^{(\delta_n)}\supseteq A$ and $A^{(\delta_n)}\supseteq A_n$. It then follows from \eqref{Item:SetFatt1} that
\[ A_n^{(\delta_n+\eta_n)} \supseteq A_n^{(\delta_n)}\supseteq A \quad \text{and} \quad A^{(\delta_n+\eta_n)}\supseteq  \big( A^{(\delta_n)} \big)^{(\eta_n)}\supseteq A_n^{(\eta_n)}. \]
Applying \eqref{Eq:SetFattEquiv} again yields $d_H\big( A_n^{(\eta_n)},A \big)\leq \delta_n+\eta_n$. Since  $\delta_n+\eta_n\to 0$ as $n\to \infty$, the desired conclusion follows.
\end{proof}
\begin{remark}
	It should be noted the set inclusion in Lemma \ref{Lem:SetFattObserv}~\eqref{Item:SetFatt1} can be strict. As a simple example, let $X=\mathbb{N}_0$ equipped with the Euclidean distance. 
	For $A=\{ 0\}$ and $\delta,\eta=\frac{1}{2}$, we have
	\[  A^{(\frac{1}{2}+\frac{1}{2})}=\{ 0,1 \} \neq \{ 0 \}=\big( A^{(\frac{1}{2})} \big) ^{( \frac{1}{2} )}. \]
	Likewise, one can construct a metric space $(X,d)$, nonempty subsets $A\subsetneq B$ and $\delta>0$ such that  $A^{(\delta)}=B^{(\delta)}$, showing that the inclusion in Lemma \ref{Lem:SetFattObserv}~\eqref{Item:SetFatt2} need not be strict.
\end{remark}

Following the previous lemma, we have
\[  A_\infty^{ (2\hat{\delta}) }\supseteq \big( A_\infty^{(\hat{\delta})} \big)^{ (\hat{\delta}) } \supseteq  A_n^{(\hat{\delta})}\supseteq A_n^{ (\delta_n) }, \]
where $\delta_n:= d_H(A_n,A_\infty)$ and $\hat{\delta}:= \max_{n\in \mathbb{N}} \delta_n$. 
Hence, for any locally finite Borel measure $\mu$ on a proper metric space, the function $g:= \mathsf{1}_{ A_{\infty}^{(\hat{\delta})} }$ is $\mu$-integrable and $g(x)\geq \mathsf{1}_{ A_n^{(\delta_n)} }$ for every $x\in X$. 
We now show that $d_H(A_n,A_\infty)\to 0$ as $n\to \infty$ implies that $\lim_{n\to \infty} \mathsf{1}_{ A_n^{(\delta_n)} }(x)= \mathsf{1}_{A_\infty}(x)$ for all $x\in X$.
Let us first introduce the notions of set-theoretic limits for a sequence of subsets. For a sequence of subsets $\{ A_n \}_{n=1}^\infty$ we define its set-theoretic \emph{limit supremum} or \emph{limit infimum} by
\begin{equation} \label{Eq:SetLimits} 
	\limsup_{n\to \infty} A_n:=\big \{ x\in X: \; \limsup_{n\to \infty} \mathbf{1}_{A_n}(x)=1  \big \}	\quad \text{or} \quad \liminf_{n\to \infty} A_n:=\big \{ x\in X: \; \liminf_{n\to \infty} \mathbf{1}_{A_n}(x)=1  \big \}
\end{equation}
accordingly. Equivalently, 
\begin{equation*}
	\limsup_{n\to \infty} A_n=\bigcap_{n=1}^\infty \bigcup_{k=n}^\infty A_k
	\quad \text{and} \quad \liminf_{n\to \infty} A_n=\bigcup_{n=1}^\infty \bigcap_{k=n}^\infty A_k.  
\end{equation*}
When these two sets coincide, we denote their common value by
\begin{equation*}
	 \lim_{n\to \infty} A_n= \big\{ x\in X: \lim_{n\to \infty} \mathbf{1}_{A_n}(x)=1  \big\}.
\end{equation*}
Relying on the equivalent formulation of Hausdorff distance, using set fattenings in \eqref{Eq:SetFattEquiv}, we obtain the following lemma. 
\begin{lemma} \label{Lem:SetFattIncl}
	Let $(X,d)$ be a metric space and let $\{ A_n \}_{n=1}^\infty$ be a sequence of closed subsets such that $d_H(A_n,A_\infty)=0$ as $n\to \infty$, for some closed $A_\infty \subseteq X$. Then the  following assertions hold. 
	\begin{enumerate}[(a)]
		\item \label{Item:SetIncl1} One has $\limsup_{n\to \infty} A_n\subseteq A_\infty$ and $\limsup_{n\to \infty} \mathsf{1}_{A_n}(x)\leq \mathsf{1}_{A_\infty}(x)$ for all $x\in X$.
		\item \label{Item:SetIncl2} Denote $\delta_n:=d_H(A_n,A_\infty)$, and let $\{ \eta_n \}$ be a sequence of nonnegative numbers such that $\lim_{n\to \infty}\eta_n=0$ and $\eta_n\geq \delta_n$ for all $n\in \mathbb{N}$. Then $\lim_{n\to \infty}\mathsf{1}_{A_n^{(\eta_n)}}(x)= \mathsf{1}_{A_\infty}(x)$ for all $x\in X$.
	\end{enumerate} 
\end{lemma}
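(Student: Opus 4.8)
The plan is to reduce both parts to the elementary fact that for the closed set $A_\infty$ one has $d(x,A_\infty)=0$ if and only if $x\in A_\infty$, controlling the distance of a point to each $A_n$ through the Hausdorff distance. For part \eqref{Item:SetIncl1} I would establish the set inclusion first and then read off the pointwise inequality. Suppose $x\in\limsup_{n\to\infty}A_n$; by \eqref{Eq:SetLimits} this means $\mathsf{1}_{A_n}(x)=1$ for infinitely many $n$, so $x\in A_{n_k}$ along some subsequence. Since $x\in A_{n_k}$ gives $d(x,A_\infty)\le\sup_{a\in A_{n_k}}d(a,A_\infty)\le d_H(A_{n_k},A_\infty)$ and the right-hand side tends to $0$, I conclude $d(x,A_\infty)=0$, whence $x\in A_\infty$ by closedness. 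This proves $\limsup_{n\to\infty}A_n\subseteq A_\infty$; because the function $\limsup_{n\to\infty}\mathsf{1}_{A_n}$ takes values in $\{0,1\}$ and equals $1$ precisely on $\limsup_{n\to\infty}A_n$ by \eqref{Eq:SetLimits}, the inclusion at once yields $\limsup_{n\to\infty}\mathsf{1}_{A_n}(x)\le\mathsf{1}_{A_\infty}(x)$ for every $x$. I note that this argument uses only closedness of $A_\infty$, not of the $A_n$.

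For part \eqref{Item:SetIncl2} I would fix $x\in X$ and split according to whether $x\in A_\infty$. If $x\in A_\infty$, then since both $A_n$ and $A_\infty$ are closed the observation following \eqref{Eq:SetFattEquiv} gives $A_\infty\subseteq A_n^{(\delta_n)}$, and monotonicity of the fattening in its radius (from \eqref{Eq:SetFattDefn}, using $\delta_n\le\eta_n$) gives $A_n^{(\delta_n)}\subseteq A_n^{(\eta_n)}$; hence $x\in A_n^{(\eta_n)}$ and $\mathsf{1}_{A_n^{(\eta_n)}}(x)=1$ for every $n$. If $x\notin A_\infty$, then $r:=d(x,A_\infty)>0$ by closedness. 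Using the companion inclusion $A_n\subseteq A_\infty^{(\delta_n)}$, each $a\in A_n$ admits $a'\in A_\infty$ with $d(a,a')\le\delta_n$, so $r\le d(x,a')\le d(x,a)+\delta_n$; taking the infimum over $a\in A_n$ yields $d(x,A_n)\ge r-\delta_n$. Thus if $x\in A_n^{(\eta_n)}$, that is $d(x,A_n)\le\eta_n$, then $\eta_n+\delta_n\ge r$, which is impossible for all large $n$ because $\eta_n+\delta_n\to0$. Hence $\mathsf{1}_{A_n^{(\eta_n)}}(x)=0$ eventually, and in both cases $\lim_{n\to\infty}\mathsf{1}_{A_n^{(\eta_n)}}(x)=\mathsf{1}_{A_\infty}(x)$.

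I expect the calculations to be routine; the only point demanding care is the role of the closedness hypotheses. Closedness of $A_\infty$ is indispensable in both directions, namely to pass from a vanishing distance to genuine membership and to guarantee $r>0$ when $x\notin A_\infty$, whereas closedness of the $A_n$ enters solely through the two-sided inclusions $A_\infty\subseteq A_n^{(\delta_n)}$ and $A_n\subseteq A_\infty^{(\delta_n)}$ furnished by \eqref{Eq:SetFattEquiv} at $\delta=\delta_n$. A more structural alternative for part \eqref{Item:SetIncl2} is to set $B_n:=A_n^{(\eta_n)}$, use Lemma \ref{Lem:SetFattObserv}\eqref{Item:SetFatt3} to obtain $d_H(B_n,A_\infty)\to0$, apply the argument of part \eqref{Item:SetIncl1} to $\{B_n\}$ (legitimate since it needs only $A_\infty$ closed) for the bound $\limsup_{n\to\infty}\mathsf{1}_{B_n}\le\mathsf{1}_{A_\infty}$, and pair it with the reverse bound $\liminf_{n\to\infty}\mathsf{1}_{B_n}\ge\mathsf{1}_{A_\infty}$ coming from $A_\infty\subseteq B_n$; the direct case analysis, however, strikes me as the most transparent.
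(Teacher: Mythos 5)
Your proof is correct. Part (a) is essentially the paper's argument: the paper proceeds by contradiction (if $x\notin A_\infty$, closedness gives $d(x,A_\infty)>\delta_0$, incompatible with $x\in A_{n_k}$ and $d_H(A_{n_k},A_\infty)\to 0$), whereas you run the same estimate directly; the substance is identical. For part (b), your primary argument is a genuinely different, more self-contained route. The paper sets $B_n:=A_n^{(\eta_n)}$, invokes Lemma \ref{Lem:SetFattObserv}~\eqref{Item:SetFatt3} to get $d_H(B_n,A_\infty)\to 0$, applies part (a) to $\{B_n\}$ to obtain $\limsup_{n\to\infty}B_n\subseteq A_\infty$, and pairs this with $A_\infty\subseteq B_n$ --- which is precisely the ``structural alternative'' you sketch in your last paragraph. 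Your two-case distance estimate replaces the appeal to Lemma \ref{Lem:SetFattObserv}~\eqref{Item:SetFatt3} entirely: what it buys is a completely explicit accounting of where each closedness hypothesis is used, at the cost of redoing by hand what the fattening lemma packages; the paper's route is shorter and isolates a reusable structural fact. Two further remarks. First, your observation that part (a) needs only $A_\infty$ closed is not idle: the paper applies (a) to the fattened sets $A_n^{(\eta_n)}$, which need not be closed in a general metric space, so your remark is exactly what legitimizes that step. Second, a small wording caveat: $x\in A_n^{(\eta_n)}$ is not equivalent to $d(x,A_n)\le\eta_n$ in a general metric space (the infimum need not be attained, so $A_n^{(\eta_n)}$ can be a proper subset of $\{x: d(x,A_n)\le\eta_n\}$); you only use the true implication $x\in A_n^{(\eta_n)}\Rightarrow d(x,A_n)\le\eta_n$, so your argument is unaffected, but the phrase ``that is'' should be ``hence.''
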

A version of Lemma~\ref{Lem:SetFattIncl}~\eqref{Item:SetIncl1} can be found in \cite{Beer74}, \cite{AvSim83} and \cite[Section~6]{Jit14}, where it is noted only in passing.
\begin{proof}$\\$
	(a) Let $x\in \limsup A_n$ and assume $x\notin A_\infty$. Since $A_\infty$ is closed, there exists some $\delta_0>0$ such that $d(x,A_\infty)>\delta_0>0$. Because $x\in \limsup A_n$, there exists a subsequence $\{ A_{n_k} \}_{k=1}^\infty$ with $x\in A_{n_k}$ for all $k$. Hence $d_H(A_{n_k}, A_\infty)\geq \frac{\delta_0}{2}$, contradicting the assumption that $d_H(A_n,A_\infty)\to 0$ as $n\to \infty$.
	Thus $\limsup_{n\to \infty} A_n \subseteq A_\infty$. The inequality for the indicator functions follows immediately from \eqref{Eq:SetLimits}.\\
	(b) By Lemma \ref{Lem:SetFattObserv}~\eqref{Item:SetFatt3}, we have $d_H\big( A_n^{(\eta_n)},A_\infty  \big)\to 0$ as $n\to \infty$. Applying part \eqref{Item:SetIncl1} to the sequence $\{ A_n^{(\eta_n)} \}_{n=1}^\infty$ yields  $\limsup_{n\to \infty} A_n^{(\eta_n)} \subseteq A_\infty$. Assume, for contradiction, that there exists an $x\in A_\infty$ such that $x\notin  \liminf_{n\to \infty} A_n^{(\eta_n)}$.     
	By \eqref{Eq:SetFattEquiv} and Lemma \ref{Lem:SetFattObserv}~\eqref{Item:SetFatt2}, one has $A_n^{(\eta_n)}\supseteq A_\infty$ for all $n\in \mathbb{N}$. Hence $x\in A_n^{(\eta_n)}$ for all $n\in \mathbb{N}$, so
	\[ x\in \bigcap_{n=1}^\infty A_n^{(\eta_n)}\subseteq \liminf_{n\to \infty} A_n^{(\eta_n)},   \]
	contradicting our assumption on $x$.
	Therefore $A_\infty\subseteq \liminf_{n\to \infty}A_n^{(\eta_n)}$, and since $\liminf_{n\to \infty}A_n^{(\eta_n)} \subseteq \limsup_{n\to \infty}A_n^{(\eta_n)}$, we conclude from \eqref{Eq:SetLimits} that 
	$ \lim_{n\to \infty} \mathbf{1}_{A_n^{(\eta_n)}}(x) = \mathbf{1}_{A_\infty}(x) $ for all $x\in X$.
\end{proof}
Before proving Theorem \ref{Thm:Main1}, we first state a  semicontinuity result for measures arising from convergence with respect to the Hausdorff distance.

\begin{proposition}
	Let $(X,d)$ be a metric space, let $\{ A_n \}_{n=1}^\infty$ be a sequence of closed subsets such that $d_H(A_n,A_\infty)\to 0$ as $n\to \infty$, and let $\mu$ be a Borel measure on $X$. If $\mu \big( \cup_{n=1}^\infty A_n \big)<\infty$, then $\mu(A_\infty)\geq \limsup_{n\to \infty} \mu(A_n)$. In particular, if there exists an $m>0$ such that $\mu(A_n)\geq m$ for infinitely many $n$, then $\mu(A_\infty)\geq m$.
\end{proposition}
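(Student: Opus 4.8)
The plan is to recognize the desired inequality as an instance of the reverse Fatou lemma applied to the indicator functions $\mathbf{1}_{A_n}$, with the required integrable dominating function supplied precisely by the finiteness hypothesis on the union. Most of the set-theoretic content is already available: Lemma~\ref{Lem:SetFattIncl}~\eqref{Item:SetIncl1} gives the pointwise bound $\limsup_{n\to\infty}\mathbf{1}_{A_n}(x)\le \mathbf{1}_{A_\infty}(x)$ for every $x\in X$, which is the only place the Hausdorff convergence is actually used.

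First I would set $g:=\mathbf{1}_{\bigcup_{k=1}^\infty A_k}$ and note that $\int_X g\,d\mu=\mu\big(\bigcup_k A_k\big)<\infty$, so $g$ is $\mu$-integrable. Since $A_n\subseteq\bigcup_k A_k$ for every $n$, one has $0\le \mathbf{1}_{A_n}\le g$ pointwise, and each $\mathbf{1}_{A_n}$ is Borel measurable because $A_n$ is closed. This domination is exactly the hypothesis needed to apply the reverse Fatou lemma to the sequence $\{\mathbf{1}_{A_n}\}$, yielding $\limsup_{n\to\infty}\int_X \mathbf{1}_{A_n}\,d\mu\le \int_X \limsup_{n\to\infty}\mathbf{1}_{A_n}\,d\mu$.

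Next I would feed in the pointwise inequality from Lemma~\ref{Lem:SetFattIncl}~\eqref{Item:SetIncl1}. Integrating $\limsup_n \mathbf{1}_{A_n}\le \mathbf{1}_{A_\infty}$ against $\mu$ and using that $A_\infty$ is closed (hence Borel) gives $\int_X \limsup_n \mathbf{1}_{A_n}\,d\mu\le \int_X \mathbf{1}_{A_\infty}\,d\mu=\mu(A_\infty)$. Chaining this with the reverse Fatou step and the identity $\mu(A_n)=\int_X \mathbf{1}_{A_n}\,d\mu$ produces $\limsup_{n\to\infty}\mu(A_n)\le\mu(A_\infty)$, which is the main assertion. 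The ``in particular'' clause is then immediate: if $\mu(A_n)\ge m$ for infinitely many $n$, then $\limsup_n \mu(A_n)\ge m$, and hence $\mu(A_\infty)\ge m$.

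The one step demanding genuine care, and the one I would flag as the main obstacle, is the reverse Fatou inequality, which relies essentially on the integrable dominating function $g$; this is where the hypothesis $\mu\big(\bigcup_k A_k\big)<\infty$ is indispensable. Without it the conclusion can fail: attaching to a fixed closed set $A_\infty$ a sequence of distinct extra points approaching $A_\infty$, each carrying a fixed atomic mass, keeps $d_H(A_n,A_\infty)\to0$ while forcing $\limsup_n\mu(A_n)>\mu(A_\infty)$ and $\mu\big(\bigcup_k A_k\big)=\infty$. Every other step is routine, the decisive inclusion $\limsup_n A_n\subseteq A_\infty$ having already been established in Lemma~\ref{Lem:SetFattIncl}.
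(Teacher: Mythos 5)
Your proposal is correct and follows essentially the same route as the paper's own proof: both invoke the pointwise bound $\limsup_{n\to\infty}\mathsf{1}_{A_n}(x)\leq \mathsf{1}_{A_\infty}(x)$ from Lemma~\ref{Lem:SetFattIncl}~\eqref{Item:SetIncl1} and then apply the reverse Fatou lemma with the dominating function $\mathsf{1}_{\cup_{n=1}^\infty A_n}$, which is $\mu$-integrable by hypothesis. Your added counterexample showing the necessity of the finiteness assumption is a nice supplement but does not change the argument.
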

	This semicontinuity result is standard. For instance,   a version of it appears in  \cite{Fil14}. 
\begin{proof}
	By Lemma \ref{Lem:SetFattIncl}~\eqref{Item:SetIncl1}, we have $\mathsf{1}_{A_\infty}(x) \geq \limsup_{n\to \infty} \mathsf{1}_{A_n(x)}$ for all $x\in X$.
	Applying the reverse Fatou's lemma to the  sequence  $\{ \mathsf{1}_{A_n} \}_{n=1}^\infty$, and noting  that $\mathsf{1}_{ \cup_{n=1}^\infty A_n }$ is $\mu$-integrable by assumption, we obtain
	\[  \mu(A_\infty)=\int_X \mathsf{1}_{A_\infty}d\mu \geq \int_X \limsup_{n\to \infty} \mathsf{1}_{A_n}d\mu \geq \limsup_{n\to \infty} \int_X \mathsf{1}_{A_n}d\mu = \limsup_{n\to \infty} \mu(A_n). \]
	If there exists an $m>0$ and an infinite subsequence $\{ A_{n_k} \}_{k=1}^\infty$ such that $\mu(A_{n_k})\geq m$ for all $k$,  then the preceding inequality implies $\mu(A_\infty)\geq \limsup_{n\to \infty} \mu(A_n) \geq m$, as claimed.
\end{proof}

We finally turn to prove Theorem \ref{Thm:Main1}.
\begin{proof}[Proof of Theorem \ref{Thm:Main1}]
	As indicated earlier, the proof follows from the dominated convergence theorem. Let $\delta_n:= d_H(A_n,A_\infty)$ for all $n\in \mathbb{N}$, and set $\hat{\delta}:= \max_{n\in \mathbb{N}}\delta_n$. 
	Define $g:= \mathsf{1}_{A_\infty^{(2\hat{\delta})}}$, $f:=\mathsf{1}_{A_\infty}$ and $f_n:=\mathsf{1}_{A_n^{(\delta_n)}}$ for all $n\in \mathbb{N}$. By Lemma \ref{Lem:SetFattIncl}~\eqref{Item:SetIncl2}, we have $f_n(x)\to f(x)$ for all $x\in X$. 
	Moreover, by Lemma~\ref{Lem:SetFattObserv}~\eqref{Item:SetFatt1}, it follows that $f_n(x)\le g(x)$ for all $x\in X$.
	Since $X$ is a proper metric space and $\mu$ is finite on compact subsets, the function $g$ is $\mu$-integrable.
	Hence, by the dominated convergence theorem,
	\[ \lim_{n\to \infty} \mu(A_n^{(\delta_n)}) = \lim_{n\to \infty} \int_{X} \mathsf{1}_{A_n^{(\delta_n)}}d\mu  \overset{DCT}{=} \int_X \mathsf{1}_{A_\infty}d\mu  = \mu(A_\infty), \]
	as desired.
\end{proof}

\section{Estimates for approximations using periodic Schr\"{o}dinger operators} \label{Sect:Periodic}
In this section, we discuss an application of the previous results to for approximations of spectra by periodic Schr\"{o}dinger operators. A discrete Schr\"{o}dinger operator $H:\ell^2(\mathbb{Z}^d)\to \ell^2(\mathbb{Z}^d)$ is an operator of the form 
\begin{equation*}
	[H\psi](\mathsf{n})= \sum_{|| \mathsf{m}-\mathsf{n} ||=1 } \psi(\mathsf{m}) + V( \mathsf{n} ) \psi( \mathsf{n} ) \quad \text{for all} \quad \psi \in \ell^2(\mathbb{Z}^d) \quad \text{and} \quad \mathsf{n} \in \mathbb{Z}^d, 
\end{equation*}  
where $V:\mathbb{Z}^d\to \mathbb{R}$ is the potential function and $|| \cdot ||$ is the 1-norm on $\mathbb{R}^d$. We say that $H$ is a periodic operator if the potential function satisfies  
\begin{equation*}
	V(\mathsf{n}+p_j\mathsf{e}_j)= V(\mathsf{n}) \quad \text{for all} \quad \mathsf{n}\in \mathbb{Z}^d \; \; \text{and} \; \; 1\leq j \leq d,
\end{equation*}
for some positive integers $p_1,...,p_d\in \mathbb{N}$. We call the integers $p_1,...,p_d$ the periods of the operator $H$. Equivalently, the stabilizer of $V$,
\begin{equation} \label{Eq:StabDefn}
	\operatorname*{stab}(V):=\{ \mathsf{m}\in \mathbb{Z}^d: V(\mathsf{n}+\mathsf{m})= V(\mathsf{n}) \; \text{for all} \; \mathsf{n}\in \mathbb{Z}^d \},
\end{equation}
is a finite index subgroup of $\mathbb{Z}^d$.
Recall that the spectrum of a discrete self-adjoint periodic Schr\"{o}dinger operator $H:\ell^2(\mathbb{Z}^d)\to \ell^2(\mathbb{Z}^d)$ is of the form $ \operatorname*{spec}(H)= \cup_{i=1}^q [a_i,b_i] $. 
Consequently, for every $\delta>0$,
\begin{equation} \label{Eq:BandFatt}
	\big( \operatorname*{spec}(H) \big)^{(\delta)}= \bigcup_{i=1}^q [a_j-\delta,b_j+\delta].
\end{equation} 
We use this simple observation to prove Corollary \ref{Cor:Last1}.
\begin{proof}[Proof of Corollary \ref{Cor:Last1}]
	By \eqref{Eq:BandFatt}, we know that $\big( \operatorname*{spec}(H_n) \big)^{(\delta_n)}=\cup_{i=1}^{q_n}\big[a_i^{(n)}-\delta_n, b_i^{(n)}+\delta_n  \big]$. The desired statement follows from a direct application of Theorem \ref{Thm:Main1}.
\end{proof}
Corollary \ref{Cor:Last1} demonstrates that periodic Schrödinger approximations can be used to compute the measure of an approximated spectrum.
In a similar spirit, we obtain the following estimate.
\begin{Cor} \label{Cor:Last2}
	Let $A_\infty\subseteq \mathbb{R}$ be a nonempty compact set and let $\{ A_n \}_{n=1}^\infty \subseteq \mathcal{K}(\mathbb{R})$ be a sequence of nonempty compact  subsets, each consisting of $q_n$ connected components. If $\lim_{n\to \infty}\operatorname*{Leb}(A_n)$ exists and $\lim_{n\to \infty}q_nd_H(A_n,A_\infty)=0$, then $\operatorname*{Leb}(A_\infty)= \lim_{n\to \infty} \operatorname*{Leb}(A_n)$, where $\operatorname*{Leb}$ is the Lebesgue measure on $\mathbb{R}$.
\end{Cor}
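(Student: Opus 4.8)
\textbf{Proof proposal for Corollary~\ref{Cor:Last2}.}

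The plan is to deduce the result directly from Theorem~\ref{Thm:Main1}, which already supplies $\lim_{n\to\infty}\operatorname*{Leb}\big(A_n^{(\delta_n)}\big)=\operatorname*{Leb}(A_\infty)$ for $\delta_n:=d_H(A_n,A_\infty)$. It therefore suffices to show that the passage from $A_n$ to its $\delta_n$-fattening changes the Lebesgue measure by an amount that vanishes in the limit; the hypothesis $q_n\delta_n\to 0$ is precisely what controls this change. Since $\lim_{n\to\infty}\operatorname*{Leb}(A_n)$ is assumed to exist, matching it against $\lim_{n\to\infty}\operatorname*{Leb}\big(A_n^{(\delta_n)}\big)$ will then close the argument.

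First I would use that every connected component of a nonempty compact subset of $\mathbb{R}$ is a (possibly degenerate) closed interval, so that $A_n=\bigcup_{i=1}^{q_n}[a_i^{(n)},b_i^{(n)}]$ with $a_i^{(n)}\le b_i^{(n)}$. Because $A_n$ is closed, its $\delta_n$-fattening extends each component by $\delta_n$ on either side, $A_n^{(\delta_n)}=\bigcup_{i=1}^{q_n}[a_i^{(n)}-\delta_n,\,b_i^{(n)}+\delta_n]$. Fattening can only enlarge the set, and the two adjoined pieces of each component have total length $2\delta_n$, so I obtain the two-sided estimate
\[
\operatorname*{Leb}(A_n)\le \operatorname*{Leb}\big(A_n^{(\delta_n)}\big)\le \operatorname*{Leb}(A_n)+2q_n\delta_n .
\]
The upper bound remains valid even when fattened components overlap or merge, since the newly covered measure is at most the sum of the lengths of the adjoined intervals, namely $2q_n\delta_n$; any overlap merely decreases the left-hand side relative to this crude count. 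Degenerate single-point components are covered by the same bound, each contributing $0$ to $\operatorname*{Leb}(A_n)$ and at most $2\delta_n$ after fattening.

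Finally, the hypothesis $\lim_{n\to\infty}q_n d_H(A_n,A_\infty)=0$ gives $q_n\delta_n\to 0$, whence the estimate forces $\operatorname*{Leb}\big(A_n^{(\delta_n)}\big)-\operatorname*{Leb}(A_n)\to 0$. Combining this with the assumed existence of $\lim_{n\to\infty}\operatorname*{Leb}(A_n)$ and with Theorem~\ref{Thm:Main1} yields
\[
\lim_{n\to\infty}\operatorname*{Leb}(A_n)=\lim_{n\to\infty}\operatorname*{Leb}\big(A_n^{(\delta_n)}\big)=\operatorname*{Leb}(A_\infty),
\]
as desired. The only point demanding genuine care — which I view as the main obstacle — is the upper bound on the fattening: one must account correctly for merging intervals and for degenerate components, both of which are absorbed by the uniform estimate that each of the $q_n$ components adds at most $2\delta_n$ of new measure. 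I would also note that the hypothesis $\lim_{n\to\infty}\operatorname*{Leb}(A_n)$ exists is in fact automatic, since the squeeze above and the convergence of $\operatorname*{Leb}\big(A_n^{(\delta_n)}\big)$ already force $\operatorname*{Leb}(A_n)$ to converge.
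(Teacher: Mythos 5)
Your proof is correct and follows essentially the same route as the paper: decompose $A_n$ into its $q_n$ closed-interval components, observe $A_n^{(\delta_n)}=\bigcup_{i=1}^{q_n}[a_i^{(n)}-\delta_n,\,b_i^{(n)}+\delta_n]$, bound $\operatorname*{Leb}\big(A_n^{(\delta_n)}\big)\leq \operatorname*{Leb}(A_n)+2q_n\delta_n$ by subadditivity, and invoke Theorem~\ref{Thm:Main1}. Your closing remark is also valid: the two-sided squeeze together with Theorem~\ref{Thm:Main1} forces $\operatorname*{Leb}(A_n)$ to converge, so the hypothesis that $\lim_{n\to\infty}\operatorname*{Leb}(A_n)$ exists is redundant — a small sharpening the paper does not record.
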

Versions of this corollary have appeared previously in \cite{Last93,Last94,Jit02,Jit14}.
\begin{proof}
	Since $A_n\subseteq \mathbb{R}$ is compact and has $q_n$ many connected components, there exist sequences $\{ a_i^{(n)} \}_{i=1}^{q_n}, \{ b_i^{(n)} \}_{i=1}^{q_n}\subseteq \mathbb{R}$ such that $a_i^{(n)}\leq b_i^{(n)}$ for all $1\leq j \leq q_n$ and 
	\[ A_n= \bigcup_{i=1}^{q_n} [a_i^{(n)}, b_i^{(n)}]  . \]
	Denoting again $\delta_n:= d_H(A_n,A_\infty)$, we have $A_n^{(\delta_n)}=\bigcup_{i=1}^{q_n} [a_i^{(n)}-\delta_n, b_i^{(n)}+\delta_n]$. Since $\operatorname*{Leb}$ is subadditive, we conclude 
	\[ \operatorname*{Leb}(A_n^{(\delta_n)})\leq \sum_{i=1}^{q_n} \operatorname*{Leb}\big( [a_i^{(n)}-\delta_n, b_i^{(n)}+\delta_n]  \big)= \sum_{i=1}^{q_n} \big( b_i^{(n)}-a_i^{(n)}+2\delta_n \big)= \operatorname*{Leb}(A_n)+2q_n \delta_n. \]
	Combining the last estimate with Theorem \ref{Thm:Main1}, since we assumed $\lim_{n\to \infty}q_n\delta_n\to 0$ and $\lim_{n\to \infty} \operatorname*{Leb}(A_n)$ exists, we conclude that $\lim_{n\to \infty} \operatorname*{Leb}(A_n)= \operatorname*{Leb}(A_\infty)$.
\end{proof}

However, obtaining an explicit form for the spectrum of a  discrete periodic Schr\"{o}dinger operator is numerically challenging. 
We therefore propose a simpler numerical estimate than that provided by Corollary \ref{Cor:Last1}. To this end, we introduce the discrete Bloch--Floquet theory and recall several relevant facts.

Let $H:\ell^2(\mathbb{Z}^d)\to \ell^2(\mathbb{Z}^d)$ be a periodic discrete Schrödinger operator with periods $p_1,\ldots,p_d$.
The aforementioned periods  induce a decomposition of $\mathbb{Z}^d$ to disjoint translates of the fundamental cell $Q:=\prod_{j=1}^d\{ 0,....,p_j-1 \}$.
The volume of this fundamental cell is  $q:=\prod_{j=1}^{d}p_j$, and we denote $\mathsf{p}=(p_1,...,p_d)$ for later use. 
For each $\theta=(\theta_1,...,\theta_d) \in [0,1)^d$ 
, we  consider the subspace $\ell^2_{(\mathsf{p},\theta)}(\mathbb{Z}^d)\subseteq \ell^2(\mathbb{Z}^d)$ consisting of functions satisfying the Floquet boundary condition
\begin{equation} \label{Eq:FloqCond}
	\psi(\mathsf{n}+p_j\mathsf{e}_j)= e^{2\pi i p_j \theta_j } \psi(\mathsf{n}) \quad \text{for all} \quad \mathsf{n}\in \mathbb{Z}^d \; \; \text{and} \; \; 1\leq j \leq d.
\end{equation}
Restricting $H$ to this subspace reduces the problem to a $q$-dimensional space $\{ \psi(\mathsf{n}): \mathsf{n} \in Q \}$, where finding generalized eigenfunctions satisfying \eqref{Eq:FloqCond} amounts to solving an eigenvalue problem for a $q\times q$ self-adjoint matrix $H(\theta)$. The family of matrices $\{ H(\theta) \}_{\theta\in \mathbb{T}^d}$ is called the Bloch--Floquet decomposition of $H$, and we write $H= \int_{\mathbb{T}^d}^{\oplus} H(\theta)$.
Using { Bloch--Floquet theory, see \cite[Theorem XIII.85]{SimonReedIV} }, the spectrum of the periodic operator $H$ satisfies
\begin{equation*}
	\operatorname*{spec}(H)= \bigcup_{\theta \in [0,1)^d} \operatorname*{spec}(H(\theta)).
\end{equation*}
Moreover, if $	\lambda_1 (\theta) \leq \lambda_2(\theta) \leq ... \leq \lambda_{q}(\theta)$ denote the ordered eigenvalues of $H(\theta)$, then each $\lambda_i(\theta)$
depends continuously on $\theta$, and 
\begin{equation*}
	\operatorname*{spec}(H)= \bigcup_{i=1}^q \Big[ \min_{\theta \in [0,1)^d}  \lambda_i(\theta),  \max_{\theta \in [0,1)^d}  \lambda_i(\theta) \Big].
\end{equation*}
Consult \cite[Theorem XIII.99]{SimonReedIV} or \cite[Corollary 2.4]{Fil21} for further details.

\begin{proposition}{\cite{Krug11}} \label{Prop:Krug}
		Let $H$ be a discrete periodic Schr\"{o}dinger operator on $\ell^2(\mathbb{Z}^d)$, which is $p_j \mathsf{e}_j$-periodic for all $1\leq j\leq d$.
		Set $q:= \prod_{j=1}^d p_j$. 
		Then, for the Bloch--Floquet decomposition $H= \int_{\mathbb{T}^d}^\oplus H(\theta)$, the eigenvalues satisfy
		\begin{equation} \label{Eq:KrugBound}
			\max_{\theta \in [0,1)^d}  \lambda_i(\theta)- \min_{\theta \in [0,1)^d}  \lambda_i(\theta)\leq \sum_{j=1}^{d} \frac{4\pi}{p_j} \quad \text{for all} \quad 1\leq i \leq q.
		\end{equation}
\end{proposition}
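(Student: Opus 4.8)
The plan is to reduce the global band-width bound to a per-direction Lipschitz estimate for the sorted eigenvalues $\lambda_1(\theta)\le\cdots\le\lambda_q(\theta)$, after first moving the Floquet phases off the boundary by a gauge transformation. The obstacle that forces this preparatory step is visible in the naive estimate: in the representation described just before the proposition, the phase $e^{2\pi i p_j\theta_j}$ dictated by \eqref{Eq:FloqCond} appears only on the ``wrap-around'' hopping bonds in direction $j$, so $\partial_{\theta_j}H(\theta)$ is supported on those corner entries and has operator norm $2\pi p_j$. Since $H(\theta)$ depends on $\theta_j$ only through $e^{2\pi i p_j\theta_j}$, it is $1/p_j$-periodic in $\theta_j$, and the crude Lipschitz bound $|\lambda_i(\theta)-\lambda_i(\theta')|\le 2\pi p_j\,|\theta_j-\theta_j'|$ applied over one period yields only an $O(1)$ estimate, exactly losing the decay in $p_j$ that the proposition asserts.

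To recover this decay I would first spread the phase evenly across all bonds. Conjugating by the diagonal unitary $U_\theta=\operatorname{diag}(e^{2\pi i(\theta_1 n_1+\cdots+\theta_d n_d)})_{\mathsf{n}\in Q}$ leaves the spectrum unchanged and produces $\tilde H(\theta):=U_\theta^\ast H(\theta)U_\theta = D+\sum_{j=1}^d(e^{2\pi i\theta_j}S_j+e^{-2\pi i\theta_j}S_j^\ast)$, where $D$ is the real diagonal of potential values and $S_j$ is the cyclic shift in direction $j$ on $\ell^2(Q)$. A direct check of the internal and the wrap-around bonds shows that after conjugation every direction-$j$ bond carries the single phase $e^{2\pi i\theta_j}$; thus $\tilde H(\theta)$ has the same eigenvalues $\lambda_i(\theta)$ as $H(\theta)$, while now depending on $\theta_j$ through $e^{2\pi i\theta_j}$ only, with the total phase $e^{2\pi i p_j\theta_j}$ recovered by traversing the full cycle of $p_j$ bonds.

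Next I would establish the per-direction Lipschitz bound. If $\theta,\theta'$ differ only in coordinate $j$, then $\tilde H(\theta)-\tilde H(\theta')=a\,S_j+\bar a\,S_j^\ast$ with $a=e^{2\pi i\theta_j}-e^{2\pi i\theta_j'}$; since $S_j$ is unitary with spectrum on the unit circle, this Hermitian difference has eigenvalues $2\operatorname{Re}(a\omega)$ with $|\omega|=1$, hence norm at most $2|a|=2\,|e^{2\pi i\theta_j}-e^{2\pi i\theta_j'}|\le 4\pi\,|\theta_j-\theta_j'|$, using $|\sin x|\le|x|$. Weyl's perturbation inequality $|\lambda_i(A)-\lambda_i(B)|\le\|A-B\|$ for Hermitian matrices then gives $|\lambda_i(\theta)-\lambda_i(\theta')|\le 4\pi\,|\theta_j-\theta_j'|$. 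In particular the sorted eigenvalues are continuous, so their maximum and minimum over the compact torus are attained at some $\theta^+,\theta^-$.

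Finally I would telescope over the coordinates and exploit periodicity to collapse each step into a single period. Interpolating between $\theta^-$ and $\theta^+$ one coordinate at a time writes $\lambda_i(\theta^+)-\lambda_i(\theta^-)$ as a sum of $d$ differences, the $j$-th of which varies only coordinate $j$ (from $\theta_j^-$ to $\theta_j^+$) with all others fixed. Because $\lambda_i$ is $1/p_j$-periodic in $\theta_j$, I can translate $\theta_j^+$ by a multiple of $1/p_j$ to bring it within distance $1/p_j$ of $\theta_j^-$ without changing the eigenvalue, and then apply the Lipschitz bound to obtain a contribution of at most $4\pi/p_j$. Summing over $j$ gives $\max_\theta\lambda_i(\theta)-\min_\theta\lambda_i(\theta)\le\sum_{j=1}^d 4\pi/p_j$, as claimed. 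The only genuinely delicate point is the gauge step, since it is precisely what converts the $2\pi p_j$ Lipschitz constant into $4\pi$; the remaining estimates are routine, and choosing the nearest period representative (distance $\le 1/(2p_j)$) in fact sharpens each contribution to $2\pi/p_j$.
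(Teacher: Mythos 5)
Your proof is correct, but there is nothing in the paper to compare it against: Proposition \ref{Prop:Krug} is quoted from \cite{Krug11}, and this note gives no internal proof, so your argument serves as a self-contained replacement for the citation. All three of your key steps check out. The gauge conjugation by $U_\theta=\operatorname{diag}\big(e^{2\pi i\theta\cdot\mathsf{n}}\big)_{\mathsf{n}\in Q}$ does convert the fiber $H(\theta)$ defined through \eqref{Eq:FloqCond} into $\tilde H(\theta)=D+\sum_{j=1}^{d}\big(e^{2\pi i\theta_j}S_j+e^{-2\pi i\theta_j}S_j^{\ast}\big)$ with $S_j$ the cyclic shift on $\ell^2(Q)$: both the interior and the wrap-around bonds acquire the single phase $e^{2\pi i\theta_j}$, exactly as you say. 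Since $S_j$ is unitary (hence normal), $aS_j+\bar aS_j^{\ast}$ is diagonalized in the same basis as $S_j$ and has eigenvalues $2\operatorname{Re}(a\omega)$ with $|\omega|=1$, so a single-coordinate change gives $\|\tilde H(\theta)-\tilde H(\theta')\|\le 2|a|\le 4\pi|\theta_j-\theta_j'|$, and Weyl's inequality transfers this to the sorted eigenvalues $\lambda_i$. The one point where a careless version of this argument would go wrong is the periodicity step, and you handle it correctly: $\tilde H(\theta)$ itself is \emph{not} $1/p_j$-periodic in $\theta_j$, but its eigenvalues are, because they coincide with those of $H(\theta)$, which depends on $\theta_j$ only through $e^{2\pi i p_j\theta_j}$; attributing the periodicity to the $\lambda_i$ rather than to the gauged matrix is exactly what legitimizes reducing each telescoping step to a step of length at most $1/(2p_j)$. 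The telescoping over the $d$ coordinates then yields \eqref{Eq:KrugBound}, and your closing remark is also accurate: taking nearest period representatives gives the sharper bound $\sum_{j=1}^{d}2\pi/p_j$, so the constant in the quoted estimate has a factor of $2$ to spare.
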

   Using the bandwidth estimate in Proposition \ref{Prop:Krug}, the following lemma provides a spectral covering of controlled diameter for periodic approximations.
   \begin{lemma} \label{Lem:PerCover}
   	Let $H:\ell^2(\mathbb{Z}^d)\to \ell^2(\mathbb{Z}^d)$  
   	be a bounded self-adjoint discrete Schrödinger operator, and let $H_{\operatorname*{per}}:\ell^2(\mathbb{Z}^d)\to \ell^2(\mathbb{Z}^d)$ be a  periodic self-adjoint discrete Schr\"{o}dinger operator with periods $p_1 \mathsf{e}_1,..., p_d \mathsf{e}_d \in \mathbb{Z}^d$.
   	Define $\delta:=d_H\big( \operatorname*{spec}(H_{\operatorname*{per}}), \operatorname*{spec}(H) \big)$ and $r:= \sum_{j=1}^d \frac{4\pi}{p_j}$. Then $\big( \operatorname*{spec}(H_{\operatorname*{per}} ) \big)^{(\delta)}$ induces a $(2\delta+r)$-cover of $\operatorname*{spec}(H)$. 
   	Moreover, if the Bloch--Floquet decomposition of $H_{\operatorname*{per}}$ is
   	\[ H_{ \operatorname*{per} }= \int_{\mathbb{T}^d}^\oplus H_{ \operatorname*{per} }(\theta) d\theta,  \] 
   	then the set $\big( \operatorname*{spec}(H_{ \operatorname*{per} }(\theta)) \big)^{(r+\delta)}$ induces a $2(\delta+r)$-cover of $\operatorname*{spec}(H)$ for all $\theta\in [0,1)^d$.
   \end{lemma}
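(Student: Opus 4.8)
The plan is to reduce both assertions to the band structure of the periodic spectrum combined with the bandwidth estimate of Proposition \ref{Prop:Krug}. Write $\lambda_1(\theta)\leq \cdots \leq \lambda_q(\theta)$ for the ordered eigenvalues of $H_{\operatorname*{per}}(\theta)$ and set $a_i:=\min_{\theta}\lambda_i(\theta)$ and $b_i:=\max_{\theta}\lambda_i(\theta)$. Then Bloch--Floquet theory gives $\operatorname*{spec}(H_{\operatorname*{per}})=\bigcup_{i=1}^q [a_i,b_i]$, while Proposition \ref{Prop:Krug} yields the uniform bandwidth bound $b_i-a_i\leq r$ for every $1\leq i\leq q$. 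These two facts are the only structural inputs; the rest is bookkeeping of interval lengths together with the elementary fattening inclusions of Lemma \ref{Lem:SetFattObserv}.

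First I would establish the first assertion. By \eqref{Eq:BandFatt} one has $\big(\operatorname*{spec}(H_{\operatorname*{per}})\big)^{(\delta)}=\bigcup_{i=1}^q [a_i-\delta,b_i+\delta]$, and each of these $q$ intervals has length $(b_i-a_i)+2\delta\leq r+2\delta$. Since the spectra of bounded self-adjoint operators are compact, hence closed, and $\delta=d_H\big(\operatorname*{spec}(H_{\operatorname*{per}}),\operatorname*{spec}(H)\big)$, the fattening characterization \eqref{Eq:SetFattEquiv} gives $\operatorname*{spec}(H)\subseteq \big(\operatorname*{spec}(H_{\operatorname*{per}})\big)^{(\delta)}$. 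Thus the collection $\{[a_i-\delta,b_i+\delta]\}_{i=1}^q$ covers $\operatorname*{spec}(H)$ by sets of diameter at most $2\delta+r$, which is precisely the claimed $(2\delta+r)$-cover.

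For the second assertion I would fix $\theta\in[0,1)^d$ and write $\operatorname*{spec}(H_{\operatorname*{per}}(\theta))=\{\lambda_1(\theta),\ldots,\lambda_q(\theta)\}$. The key observation is that $a_i\leq \lambda_i(\theta)\leq b_i$ by definition of $a_i$ and $b_i$, so the bandwidth bound forces $[a_i,b_i]\subseteq[\lambda_i(\theta)-r,\lambda_i(\theta)+r]$; taking the union over $i$ yields the uniform containment $\operatorname*{spec}(H_{\operatorname*{per}})\subseteq \big(\operatorname*{spec}(H_{\operatorname*{per}}(\theta))\big)^{(r)}$. Combining this with the inclusion of the first part and applying Lemma \ref{Lem:SetFattObserv}~\eqref{Item:SetFatt2} (monotonicity) followed by Lemma \ref{Lem:SetFattObserv}~\eqref{Item:SetFatt1} (composition of fattenings), I would obtain
\[ \operatorname*{spec}(H)\subseteq \big(\operatorname*{spec}(H_{\operatorname*{per}})\big)^{(\delta)} \subseteq \Big(\big(\operatorname*{spec}(H_{\operatorname*{per}}(\theta))\big)^{(r)}\Big)^{(\delta)} \subseteq \big(\operatorname*{spec}(H_{\operatorname*{per}}(\theta))\big)^{(r+\delta)}. \]
The right-hand side is the union of the $q$ intervals $[\lambda_i(\theta)-(r+\delta),\lambda_i(\theta)+(r+\delta)]$, each of diameter $2(r+\delta)$, giving the asserted $2(\delta+r)$-cover for every $\theta$.

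The only genuinely nonroutine point is the uniform-in-$\theta$ containment $\operatorname*{spec}(H_{\operatorname*{per}})\subseteq \big(\operatorname*{spec}(H_{\operatorname*{per}}(\theta))\big)^{(r)}$, and this is exactly where Proposition \ref{Prop:Krug} enters: the bandwidth bound $b_i-a_i\leq r$ is what permits replacing the entire band $[a_i,b_i]$ by an $r$-neighborhood of the single fiber eigenvalue $\lambda_i(\theta)$, uniformly over $\theta$. Everything else reduces to tracking interval lengths and the repeated use of the fattening inclusions already recorded in Lemma \ref{Lem:SetFattObserv}.
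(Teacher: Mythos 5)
Your proof is correct and follows essentially the same route as the paper: Bloch--Floquet band structure plus the bandwidth bound of Proposition \ref{Prop:Krug}, combined with the fattening characterization \eqref{Eq:SetFattEquiv} and the inclusions of Lemma \ref{Lem:SetFattObserv}. The only difference is that you spell out in detail the containment $\big(\operatorname*{spec}(H_{\operatorname*{per}})\big)^{(\delta)}\subseteq \big(\operatorname*{spec}(H_{\operatorname*{per}}(\theta))\big)^{(r+\delta)}$, which the paper asserts more tersely as a second application of Proposition \ref{Prop:Krug}.
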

   \begin{proof}
   	We recall that the spectra of $H_{\operatorname*{per}}(\theta)$ are given by $$\operatorname*{spec}(H_{ \operatorname*{per} })=\bigcup_{i=1}^{q}  [\check{\lambda}_i, \hat{\lambda}_i] ,$$
   	where $\hat{\lambda}_i:=\max_{\theta \in [0,1)^d} \lambda_i(\theta)$ and $\check{\lambda}_i:=\min_{\theta \in [0,1)^d} \lambda_i(\theta)$ for all $1\leq i \leq q$.
   	Proposition \ref{Prop:Krug} implies that $\max_{1\leq i \leq q} \hat{\lambda}_i-\check{\lambda}_i \leq r$. The collection $\big\{  [ \check{\lambda}_i-\delta, \hat{\lambda}_i + \delta ]  \big\}_{i=1}^q$ forms a $(2\delta+r)$-cover of $\operatorname*{spec}(H)$ by \eqref{Eq:SetFattEquiv}. 
   	\\
   	Applying Proposition \ref{Prop:Krug} again, we see that $  \big( \operatorname*{spec}(H_{\operatorname*{per}}) \big)^{(\delta)}\subseteq \big( \operatorname*{spec}(H_{\operatorname*{per}}(\theta)) \big)^{(r+\delta)}$ for all $\theta \in [0,1)^d$. Moreover, since $\operatorname*{spec}(H) \subseteq \big( \operatorname*{spec}(H_{\operatorname*{per}}) \big)^{(\delta)}$ the collection $\big\{  \big[ \lambda_i(\theta) -(\delta+r), \lambda_i(\theta) + (\delta+r)  \big]  \big\}$ 
   forms a $2(\delta+r)$-cover of $\operatorname*{spec}(H)$ by \eqref{Eq:SetFattEquiv}.
   \end{proof}
   
   The last lemma implies the existence of spectral covers for the spectrum of a periodically approximated operator. Such spectral covers, provided by Lemma~\ref{Lem:PerCover}, are essential for obtaining computationally feasible estimates of the spectra of strongly aperiodic operators.
   A discrete Schrödinger operator $H:\ell^2(\mathbb{Z}^d)\to \ell^2(\mathbb{Z}^d)$ with potential function $V$ is called \emph{strongly aperiodic} if, for every nonzero $\mathsf{m}\in \mathbb{Z}^d$, there exists an $\mathsf{n}\in \mathbb{Z}^d$ such that $V(\mathsf{n}+\mathsf{m})\neq V(\mathsf{n})$. 
   Equivalently, the stabilizer of $V$ is the trivial subgroup $\{ \mathsf{0} \}$. 
   Using Theorem \ref{Thm:Main1} together with Lemma \ref{Lem:PerCover}, we obtain the following result for strongly aperiodic discrete operators.
   \begin{proposition} \label{Prop:NumerApprox}
   	Let $H:\ell^2(\mathbb{Z}^d)\to \ell^2(\mathbb{Z}^d)$ be a bounded self-adjoint strongly aperiodic Schrödinger operator, and let $\mu$ be a locally finite Borel measure on $\mathbb{R}$. Assume there exists a sequence of periodic self-adjoint discrete Schrödinger operators $H_n:\ell^2(\mathbb{Z}^d)\to \ell^2(\mathbb{Z}^d)$ such that $H_n$ converges in the strong operator topology to $H$ and $\lim_{n\to \infty} d_H\big( \operatorname*{spec}(H_n) , \operatorname*{spec}(H) \big)=0$.  
   	Then there exists a sequence $\{ r_n \}_{n=1}^\infty$ with $\lim_{n\to \infty}r_n=0$ such that, for any $\theta\in [0,1)^d$,
   	\begin{equation*}
   		\mu( \operatorname*{spec}(H) )= \lim_{n\to \infty} \mu \Big( \bigcup_{i=1}^{q_n} \big[ \lambda_i^{(n)}(\theta)-(\delta_n +r_n),\lambda_i^{(n)}(\theta)+(\delta_n +r_n)   \big]   \Big)
   	\end{equation*}
   	where $\lambda_1^{(n)}(\theta)\leq ... \leq \lambda_{q_n}^{(n)}(\theta)$ are the eigenvalues of the fiber matrix $H_n(\theta)$ and $\delta_n:= d_H\big(  \operatorname*{spec}(H_n), \operatorname*{spec}(H) \big)$.
   \end{proposition}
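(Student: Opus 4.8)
The plan is to combine Theorem~\ref{Thm:Main1} applied to the spectral sets with the covering estimate from Lemma~\ref{Lem:PerCover}, so that the $\theta$-dependent covers $\big(\operatorname*{spec}(H_n(\theta))\big)^{(\delta_n+r_n)}$ can be substituted for the exact fattened spectra $\big(\operatorname*{spec}(H_n)\big)^{(\delta_n)}$ without changing the limiting measure. The key point is that these two families of sets are squeezed between two sequences of fattenings of $\operatorname*{spec}(H_n)$ whose diameters both tend to zero, so their measures must share the common limit $\mu(\operatorname*{spec}(H))$.

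First I would set $r_n:=\sum_{j=1}^d \frac{4\pi}{p_j^{(n)}}$, where $p_1^{(n)},\ldots,p_d^{(n)}$ are the periods of $H_n$. The main preliminary observation is that $r_n\to 0$: because $H_n\to H$ strongly and $H$ is strongly aperiodic (its stabilizer is trivial), the periods $p_j^{(n)}$ must grow without bound as $n\to\infty$, which forces $r_n\to 0$. This is the step I expect to be the \emph{main obstacle}, since it requires translating strong-operator convergence together with strong aperiodicity into unboundedness of the periods; the cleanest route is to argue that if some period $p_j^{(n)}$ stayed bounded along a subsequence, then by passing to a further subsequence one could extract a nonzero period vector $\mathsf{m}$ fixed by all the $V_n$ along that subsequence, and strong convergence of the operators would transfer this periodicity to $V$, contradicting strong aperiodicity of $H$.

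Next I would invoke Lemma~\ref{Lem:PerCover} to sandwich the candidate cover. Writing $\Sigma_n:=\operatorname*{spec}(H_n)$ and $\Sigma:=\operatorname*{spec}(H)$, the lemma and its proof give, for every $\theta\in[0,1)^d$,
\[
\big(\Sigma_n\big)^{(\delta_n)} \;\subseteq\; \bigcup_{i=1}^{q_n}\big[\lambda_i^{(n)}(\theta)-(\delta_n+r_n),\,\lambda_i^{(n)}(\theta)+(\delta_n+r_n)\big] \;=\; \big(\operatorname*{spec}(H_n(\theta))\big)^{(\delta_n+r_n)}.
\]
On the other side, Proposition~\ref{Prop:Krug} shows each fiber eigenvalue $\lambda_i^{(n)}(\theta)$ lies in the $i$-th band $[\check\lambda_i^{(n)},\hat\lambda_i^{(n)}]$ of width at most $r_n$, so the $\theta$-cover is in turn contained in $\big(\Sigma_n\big)^{(\delta_n+2r_n)}$. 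Thus
\[
\big(\Sigma_n\big)^{(\delta_n)} \;\subseteq\; \big(\operatorname*{spec}(H_n(\theta))\big)^{(\delta_n+r_n)} \;\subseteq\; \big(\Sigma_n\big)^{(\delta_n+2r_n)},
\]
using Lemma~\ref{Lem:SetFattObserv}\eqref{Item:SetFatt1} for the second inclusion.

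Finally I would close the argument by monotonicity of $\mu$ and a double application of Theorem~\ref{Thm:Main1}. The inclusions give
\[
\mu\big((\Sigma_n)^{(\delta_n)}\big) \;\le\; \mu\Big(\bigcup_{i=1}^{q_n}\big[\lambda_i^{(n)}(\theta)-(\delta_n+r_n),\,\lambda_i^{(n)}(\theta)+(\delta_n+r_n)\big]\Big) \;\le\; \mu\big((\Sigma_n)^{(\delta_n+2r_n)}\big).
\]
The left-hand measure converges to $\mu(\Sigma)$ by Theorem~\ref{Thm:Main1} directly. For the right-hand measure I set $\eta_n:=\delta_n+2r_n$, note $\eta_n\ge\delta_n$ and $\eta_n\to 0$, and apply the $\mu$-integrable domination from the proof of Theorem~\ref{Thm:Main1} together with Lemma~\ref{Lem:SetFattIncl}\eqref{Item:SetIncl2} to conclude $\mu\big((\Sigma_n)^{(\eta_n)}\big)\to\mu(\Sigma)$ as well. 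The squeeze theorem then forces the middle term to converge to $\mu(\Sigma)$, which is exactly the claimed identity, uniformly in the choice of $\theta$.
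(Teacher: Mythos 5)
Your proposal is correct, and it follows the same overall route as the paper: define $r_n:=\sum_{j=1}^d 4\pi/p_j^{(n)}$, deduce $r_n\to 0$ from strong aperiodicity, and use Lemma~\ref{Lem:PerCover} (via Proposition~\ref{Prop:Krug}) to pass from the fattened periodic spectra to the $\theta$-dependent covers, concluding with the machinery of Theorem~\ref{Thm:Main1}. The differences are in the bookkeeping of the last step, and there your version is actually tighter than the paper's. The paper ends by bounding $d_H\big(\operatorname*{spec}(H_n(\theta)),\operatorname*{spec}(H)\big)\leq 2(r_n+\delta_n)$ (it even contains a typo, writing $q_n$ in place of $\delta_n$) and then citing Theorem~\ref{Thm:Main1} directly; read literally, that theorem does not apply, since the fattening radius $\delta_n+r_n$ appearing in the statement is only an upper bound for, not equal to, the Hausdorff distance from $\operatorname*{spec}(H_n(\theta))$ to $\operatorname*{spec}(H)$. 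The honest tool is exactly what you invoke: Lemma~\ref{Lem:SetFattIncl}\eqref{Item:SetIncl2}, which allows fattening radii that dominate the Hausdorff distance and tend to zero, combined with the integrable domination from the proof of Theorem~\ref{Thm:Main1}. Your sandwich
\[
\big(\Sigma_n\big)^{(\delta_n)}\;\subseteq\;\big(\operatorname*{spec}(H_n(\theta))\big)^{(\delta_n+r_n)}\;\subseteq\;\big(\Sigma_n\big)^{(\delta_n+2r_n)}
\]
together with the squeeze is a legitimate (and slightly more careful) packaging of the paper's intended argument; note that the right-hand inclusion could even be sharpened to $\big(\Sigma_n\big)^{(\delta_n+r_n)}$ simply by monotonicity (Lemma~\ref{Lem:SetFattObserv}\eqref{Item:SetFatt2}), since $\operatorname*{spec}(H_n(\theta))\subseteq\Sigma_n$. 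You also fill in a detail the paper only asserts: the subsequence argument showing that bounded periods plus strong operator convergence would transfer a nonzero period of the $V_n$ to $V$, contradicting strong aperiodicity.
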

   This proposition provides a numerically oriented version of Corollary~\ref{Cor:Last1}.
   It is particularly suitable for computational studies, since the direct evaluation of spectra for multidimensional periodic Schrödinger operators via Bloch--Floquet theory is often computationally demanding. See also \cite[Example~3.1]{CEF24} for comparison. The assumptions of Proposition~\ref{Prop:NumerApprox} arise naturally in the context of dynamically defined operators, where the underlying dynamical system is itself approximated. 
   For an overview of one-dimensional dynamically defined operators, see \cite{DF22,DaFi24-book_2}. Spectral approximations via dynamical approximations are treated systematically in \cite{BBdN18}, with quantitative refinements given in \cite{BBC19,BecTak21}. Conditions ensuring successful dynamical approximation in the substitution setting appear in \cite{BBdN20,BBPT}.
   \begin{proof} [Proof of Proposition \ref{Prop:NumerApprox}]
   	For every $n\in \mathbb{N}$, there exist $p_1^{(n)}, ...,p_d^{(n)}$ such that $H_n$ is periodic with respect to $p_j \mathsf{e}_j$ for all $1\leq j \leq d$. It follows that $q_n= \prod_{j=1}^{d}p_j^{(n)}$ for all $n\in \mathbb{N}$. Denote $r_n:= \sum_{j=1}^{d}\frac{4\pi}{p_j^{(n)}}$ for all $n\in \mathbb{N}$. It follows from Lemma \ref{Lem:PerCover} that $\operatorname*{spec}(H)\subseteq \bigcup_{i=1}^{q_n}\big[ \lambda_i^{(n)}(\theta)- (\delta_n+r_n), \lambda_i^{(n)}(\theta)+(\delta_n+r_n)  \big]$. Since $H$  is strongly aperiodic, it must follow that $p_j^{(n)}\to \infty$ as $n\to \infty$, for all $1\leq j\leq d$. In particular, we conclude that $r_n\to 0$ as $n\to \infty$. Moreover, it follows that $d_H( \operatorname*{spec}\big( H_n(\theta), \operatorname*{spec}(H) \big) )\leq  2(r_n+q_n)$ for all $n\in \mathbb{N}$ and $r_n+q_n \overset{n\to \infty}{\to}0$. By \eqref{Eq:BandFatt} and Theorem \ref{Thm:Main1}, we obtain the desired equality.
   \end{proof} 
   
   Using Lemma \ref{Lem:PerCover}, we can also restate a result of Last \cite{Last93,Last94} concerning an upper bound on the fractal dimension.
   For  $\alpha\geq 0$ and $\delta>0$, we define the \emph{$(\alpha,\delta)$-Hausdorff content} of a Borel set $E\subseteq \mathbb{R}^d$ as
   \begin{equation*}
   	\mathcal{H}_\delta^{\alpha}(E)= \inf \Big\{ \sum_{m=1}^\infty \operatorname*{diam}(C_m)^\alpha : \exists \{ C_m \}_{m=1}^\infty, \;  E \subseteq \cup C_m, \; \operatorname*{diam}(C_m)\leq \delta  \Big\}.
   \end{equation*}
   Letting $\delta\to 0 ^+$ yields a monotone limit $\mathcal{H}^\alpha(E):= \lim_{\delta\to 0^+}\mathcal{H}_\delta^{\alpha}(E)$, which is called the \emph{$\alpha$-Hausdorff measure} of $E$. See \cite[Chapter 1.2]{Falcon90} for more details. A classic result states that for every set $E\subseteq \mathbb{R}^d$, there exists a critical value $\alpha_E\in [0,d]$ such that $\mathcal{H}^{\alpha}(E)=0$ for $\alpha< \alpha_E$ and  $\mathcal{H}^{\alpha}(E)=\infty$ for $\alpha> \alpha_E$. This number $\alpha_E$ is called the \emph{Hausdorff dimension} of $E$. For more details on the Hausdorff measure and dimension, consult \cite{Falcon90} and the references therein.\\ 
   The following observation follows directly from these definitions. Suppose $\{ \mathcal{C}_n \}_{n=1}^\infty$ 
   is a sequence of finite covers of $E$, where $\mathcal{C}_n=\{ I_{j,n} \}_{i=1}^{q_n}$ is an  $\eta_n$-cover of $E$, then 
   \begin{equation} \label{Eq:HausDimObserv}
   	\eta_n  \overset{n\to \infty}{\to} 0^+ \; \; \text{and} \;\; \limsup_{n\to \infty} \sum_{i=1}^{q_n} \big(\operatorname*{Leb}(I_{j,n}) \big)^\alpha <\infty
   	\quad \Longrightarrow \quad
   	\mathcal{H}^\alpha(E)<\infty \;\; \text{and} \;\; \dim_H(E)\leq \alpha.
   \end{equation}
   The next result provides an upper bound on the Hausdorff dimension of a limit set approximated by finite unions of intervals.
   
   \begin{lemma} \label{Lem:DimUpBd}
   	Let $\{ A_n \}_{n=1}^\infty \subseteq \mathcal{K}(\mathbb{R})$ be a sequence of sets of the form $A_n=\cup_{i=1}^{q_n}[a_i^{(n)},b_i^{(n)}]$ converging in the Hausdorff distance to a compact set $A\in \mathcal{K}(\mathbb{R})$. Denote $\delta_n:= d_H(A_n,A)$ for all $n\in \mathbb{N}$.
   	Then the following assertions hold.
   	\begin{enumerate}[(a)]
   		\item \label{Item:DimLast} If there exists a  constant $\beta>0$ such that $q_n\overset{n\to \infty}{\to}\infty$ and $\limsup_{n\to \infty} \operatorname*{Leb}\big( A_n^{(\delta_n)} \big)\cdot q_n^{\beta}<\infty$, then $\mathcal{H}^{\frac{1}{1+\beta}}(A)<\infty$ and $\dim_H(A)\leq \frac{1}{1+\beta}$.
   		\item \label{Item:DimDirect} Let $r_n:= \max_{1\leq  j \leq n}(b_i^{(n)}-a_i^{(n)})$ for all $n\in \mathbb{N}$. If there exists a constant $\alpha>0$ such that  $\limsup_{n\to \infty} (2\delta_n+r_n)^\alpha q_n< \infty$ and $r_n \overset{n\to \infty}{\to}0$, then $\mathcal{H}^{\alpha}(A)<\infty$ and $\dim_H(A)\leq \alpha$.
   	\end{enumerate}

   \end{lemma}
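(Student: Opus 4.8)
The plan is to exhibit, for each $n$, an explicit finite interval cover of $A$ and then invoke the observation \eqref{Eq:HausDimObserv}. The common starting point for both parts is that, since $A_n$ and $A$ are closed and $\delta_n = d_H(A_n,A)$, the equivalence \eqref{Eq:SetFattEquiv} gives $A \subseteq A_n^{(\delta_n)}$; and because $A_n = \bigcup_{i=1}^{q_n}[a_i^{(n)},b_i^{(n)}]$ we have $A_n^{(\delta_n)} = \bigcup_{i=1}^{q_n}[a_i^{(n)}-\delta_n,\,b_i^{(n)}+\delta_n]$, each summand being an interval of length $b_i^{(n)}-a_i^{(n)}+2\delta_n$. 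Thus $\mathcal{C}_n := \{[a_i^{(n)}-\delta_n,\,b_i^{(n)}+\delta_n]\}_{i=1}^{q_n}$ is always a finite interval cover of $A$, and the whole argument reduces to controlling its mesh together with the sum $\sum_i (\operatorname*{diam} I_{i,n})^\alpha$ for the relevant exponent.

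For part \eqref{Item:DimDirect} I would argue directly with $\mathcal{C}_n$. Each of its intervals has length at most $r_n+2\delta_n$, so the mesh $\eta_n := 2\delta_n + r_n$ tends to $0$ by hypothesis, using $r_n \to 0$ and $\delta_n = d_H(A_n,A)\to 0$. Summing the crude bound gives $\sum_{i=1}^{q_n}(\operatorname*{diam} I_{i,n})^\alpha \le q_n(2\delta_n+r_n)^\alpha$, whose $\limsup$ is finite by assumption. The observation \eqref{Eq:HausDimObserv} then yields $\mathcal{H}^\alpha(A)<\infty$ and $\dim_H(A)\le \alpha$; no concavity input is needed here.

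For part \eqref{Item:DimLast} the exponent is $\alpha := \tfrac{1}{1+\beta}\in(0,1)$, for which $t\mapsto t^\alpha$ is concave, and the point is to feed in the hypothesis on $\operatorname*{Leb}(A_n^{(\delta_n)})$ rather than on the (generally larger) sum of the interval lengths. To this end I would first merge the overlapping members of $\mathcal{C}_n$ into the finitely many pairwise disjoint connected components $J_1,\dots,J_{m_n}$ of $A_n^{(\delta_n)}$, with $m_n\le q_n$ and $\sum_k |J_k| = \operatorname*{Leb}(A_n^{(\delta_n)})$; these still cover $A$. Jensen's inequality applied to the concave map $t\mapsto t^\alpha$ gives $\sum_{k}|J_k|^\alpha \le m_n^{1-\alpha}\big(\sum_k|J_k|\big)^\alpha \le q_n^{1-\alpha}\operatorname*{Leb}(A_n^{(\delta_n)})^\alpha$, and since $1-\alpha = \alpha\beta$ this equals $\big(q_n^{\beta}\operatorname*{Leb}(A_n^{(\delta_n)})\big)^\alpha$, whose $\limsup$ is finite by hypothesis.

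The step I expect to require the most care is verifying the mesh condition in part \eqref{Item:DimLast}, since nothing is assumed there forcing the component lengths $b_i^{(n)}-a_i^{(n)}$ to vanish. The resolution is that the two hypotheses are self-correcting: boundedness of $q_n^\beta \operatorname*{Leb}(A_n^{(\delta_n)})$ combined with $q_n\to\infty$ forces $\operatorname*{Leb}(A_n^{(\delta_n)})\to 0$ (consistent with $\operatorname*{Leb}(A)=0$ via Theorem \ref{Thm:Main1}). Hence $\max_k |J_k| \le \sum_k |J_k| = \operatorname*{Leb}(A_n^{(\delta_n)}) \to 0$, so the merged cover has mesh tending to $0$ and \eqref{Eq:HausDimObserv} applies, giving $\mathcal{H}^{1/(1+\beta)}(A)<\infty$ and $\dim_H(A)\le \tfrac{1}{1+\beta}$.
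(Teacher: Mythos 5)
Your proof is correct and follows essentially the same route as the paper's: both parts rest on the cover $\{[a_i^{(n)}-\delta_n,\,b_i^{(n)}+\delta_n]\}$ of $A$, with part (b) handled by the crude mesh bound $2\delta_n+r_n$ and part (a) by passing to the connected components of $A_n^{(\delta_n)}$ and applying the concavity estimate (your Jensen step is the same inequality the paper obtains via H\"older), followed by the observation that $\operatorname*{Leb}(A_n^{(\delta_n)})\le C q_n^{-\beta}\to 0$ controls the mesh. The only differences are cosmetic (naming the inequality, ordering of the parts).
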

   A version of Lemma~\ref{Lem:DimUpBd},\eqref{Item:DimLast} appears in~\cite[Lemma~5.1]{Last94}.
   Moreover, a refined estimate based on~\cite{Last94} shows that the Hausdorff dimension of the spectrum of the almost Mathieu operator is zero for a $G_\delta$ set of frequencies. See~\cite{Last16} for details.
   \begin{proof}$~$\\
   	(a) First note that $A_n^{(\delta_n)}=\cup_{i=1}^{\tilde{q}_n} I_{j,n}$, where $\tilde{q}_n$ is the number of connected components of $A_n^{(\delta_n)}$ and each $I_{j,n}$ is a closed interval. Using H\"{o}lder's inequality, we can see  that
   	\begin{equation*}
   		\sum_{i=1}^{q_n} \operatorname*{Leb}\big( I_{j,n} \big)^{\frac{1}{t}}\leq \Bigg( \sum_{i=1}^{q_n} \operatorname*{Leb}\big( I_{j,n}\big) \Bigg)^{\frac{1}{t}}  \Bigg( \sum_{i=1}^{\tilde{q}_n}1 \Bigg)^{1-\frac{1}{t}}= \Big( \operatorname*{Leb}\big( A_n^{(\delta_n)} \big) \Big)^{\frac{1}{t}} \tilde{q}_n^{1-\frac{1}{t}}, 
   	\end{equation*}
   	for all $t\geq 1$. Since $\tilde{q}_n\leq q_n$ and $t \geq 1$, it also follows that
   	\begin{equation*}
   		\sum_{i=1}^{q_n} \operatorname*{Leb}\big( I_{j,n} \big)^{\frac{1}{t}}\leq \Big( \operatorname*{Leb}\big( A_n^{(\delta_n)} \big) \Big)^{\frac{1}{t}} q_n^{1-\frac{1}{t}},
   	\end{equation*}
   	while $\{ I_{j,n} \}_{i=1}^{ \tilde{q}_n }$ is a cover of $A_n^{(\delta_n)}$. By \eqref{Eq:SetFattEquiv}, it is also an cover of $A$.
   	Using our assumption on $\beta>0$, there exists a constant $\hat{C}>0$ such that $\operatorname*{Leb}\big( A_n^{(\delta_n)} \big)\leq \frac{\hat{C}}{q_n^\beta}$ for all $n\in \mathbb{N}$. Hence, 
   	\begin{equation*}
   		\sum_{i=1}^{q_n} \operatorname*{Leb}\big( I_{j,n} \big)^{\alpha}\leq \hat{C}^{ \alpha } q_n^{ 1 -\alpha(1+\beta) }, \quad \text{for} \; \; \alpha =\frac{1}{t}.
   	\end{equation*}
   	Observing that each $\{ I_{j,n} \}_{i=1}^{ \tilde{q}_n }$ is an $\eta_n$-cover, for $\eta_n \leq \operatorname*{Leb}\big( A_n^{(\delta_n)} \big)$, we conclude from  \eqref{Eq:HausDimObserv} that $\mathcal{H}^\alpha(A)<\infty$ for all $\alpha \leq \frac{1}{1+\beta}$ and $\dim_H(A)\leq \frac{1}{1+\beta}$.
   	\\
   	(b) By \eqref{Eq:SetFattEquiv}, we can see that $\mathcal{C}_n= \big\{ [ a_i^{(n)}-\delta_n, b_i^{(n)}+\delta_n]  \big\}$ 
   	is a $(2\delta_n +r_n)$-cover of $A$. By our assumptions, we have $(2\delta_n+r_n)\overset{n\to \infty}{\to}0$ and $q_n(2\delta_n+r_n)^\alpha\geq \mathcal{H}_{(2\delta_n+r_n)}^{(\alpha)}(A)$. The desired result now follows from a direct application of \eqref{Eq:HausDimObserv}.
   \end{proof}

	\bibliographystyle{alpha}
	\bibliography{HausMeasSub1}
	
\end{document}